\documentclass[12pt,reqno]{amsproc}
%%%%%%%%%%%%%
%%%%%% Copied from Shortcuts-JMC
%%%%%%%%%%%%%
%%%-----PACKAGES------%%%
%%%%%%%%%%%%%%%%%%%%%%%%%%%%%%

%--------Added more space on one side to allow for todo notes
%\usepackage[left=1in,top = 1in, bottom = 1in, right = 1.5in]{geometry}
%--------Put this version back when we're done editing
\usepackage[margin = 1in]{geometry}

\usepackage{comment}
\usepackage[small,bf]{caption}
\usepackage{nicefrac}
\usepackage[usenames, dvipsnames]{color}
\usepackage{hyperref}
\hypersetup
{
    colorlinks,	%
    citecolor=blue,%
    filecolor=black,%
    linkcolor=blue,%
    urlcolor=blue
}
% %\usepackage[all,cmtip]{xy}
%\usepackage{amssymb,amsmath,amsthm,amscd}
\usepackage{amsmath}
\usepackage{amsthm}
\usepackage{amssymb}
\usepackage{amsfonts}
\usepackage{mathabx}
\usepackage{eulervm, xfrac}
\usepackage[sans]{dsfont}
\usepackage{mathdots}
\usepackage{mathrsfs}
\usepackage{rotating, setspace}
\DeclareMathAlphabet{\mathbfsf}{\encodingdefault}{\sfdefault}{bx}{n}
\usepackage[all]{xy}
\usepackage{tikz-cd}
%\UseTwocells
\usepackage{enumerate}
\usepackage[capitalise]{cleveref}

\usepackage{thmtools,thm-restate}

%%%%%%%%%%%%%%%%%
%%%%%%%%%%%%%%%%%
%%%% FONT AND SPACING STUFF
\DeclareFontShape{OT1}{cmr}{bx}{sc}{<-> cmbcsc10}{}

%%%%%%%%%%%%%%%%%%%%%%%%%%
%%%%%% TO DO
%%%%%%%%%%%%%%%%%%%%%%%%%%
\usepackage[colorinlistoftodos]{todonotes}

% \newcommand\liz[1]{\textcolor{purple}{($\bigstar$Liz says:$\bigstar$: #1)}}

%%%%%%%%%%%%%%%%% Definition command %%%%%%%%%%%%%
\newcommand{\define}[1]{\textbf{#1}}
%%%%%%%%%%%%%%%%%Sheaves and Cosheaves

\newcommand{\cosheaf}[1]{\widehat{#1}}
%%%%%%%%%%%%%%%%%
%%%%%%%%%%%%%%%%%
%--------Theorem Environments--------
\theoremstyle{definition}
\newtheorem{thm}{Theorem}[section]

\newtheorem{defn}[thm]{Definition}

\newtheorem{ex}[thm]{Example}

\newtheorem{rmk}[thm]{Remark}

%%%%%%%%%%%%%%%%%%
%%%%%%%%%%%%%%%%%%
%%%% SHORTCUTS
%%%%%%%%%%%%%%%%%%
%%%%%%%%%%%%%%%%%%

\newcommand{\calJ}{\mathcal{J}}
\newcommand{\cJ}{\mathcal{J}}

\newcommand{\covU}{\mathcal{U}}

\newcommand{\cV}{\mathcal{V}}

\newcommand{\aat}{\mathbfsf{A}}

\newcommand{\bat}{\mathbfsf{B}}

\newcommand{\cat}{\mathbfsf{C}}

\newcommand{\Open}{\mathbfsf{Open}}
\newcommand{\Down}{\mathbfsf{Down}}

\newcommand{\op}{\mathbfsf{op}}

\newcommand{\Lan}{\mathsf{Lan}}
\newcommand{\Ran}{\mathsf{Ran}}

\newcommand{\slfp}{\mathsf{p}}
\newcommand{\slfj}{\mathsf{j}}

\newcommand{\id}{\text{id}}

\newcommand{\pP}{\mathcal{P}}
\newcommand{\pQ}{\mathcal{Q}}

\newcommand{\topX}{\mathcal{X}}

\usepackage[backend = bibtex, url=false,sorting=nyt,style=numeric-comp,maxbibnames = 99]{biblatex}
\bibliography{erratum}

%%------------------------------------------------------%%
%%-------------------Begin Document---------------------%%
%%------------------------------------------------------%%
\begin{document}

\title{Functors on Posets Left Kan Extend to Cosheaves: \\ an \emph{Erratum}}

\author{Justin Michael Curry}
\maketitle

\begin{abstract}
In this note we give a self-contained proof of a fundamental statement in the study of cosheaves over a poset. Specifically, if a functor has domain a poset and co-domain a co-complete category, then the left Kan extension of that functor along the embedding of the domain poset into its poset of down-sets is a cosheaf.
This proof is meant to replace the mistaken proofs published in the author's thesis and an article on dualities exchanging cellular sheaves and cosheaves.
\end{abstract}

\section{Introduction}

The study of sheaves and cosheaves on posets has gained recent renewed attention.
This is in part due to their deployment in more applied settings such as topological data analysis, where they provide useful reformulations of persistent homology~\cite{Curry2014,kashiwara2018persistent,berkouk2019ephemeral} and Reeb graphs~\cite{deSilva2016}. Other examples include network coding and signal processing~\cite{ghrist2011network,Robinson2013,hansen2018toward}.
However, despite this remarkable influx of new activity, the study of sheaves on posets has a long history and similar perceptions of a common idea have appeared again and again.

Sir Eric Christopher Zeeman introduced ``simplicial and \v{C}ech analogues of Leray's sheaf theory'' in his 1955 thesis, which was published in three articles~\cite{zeeman1962dihomology,zeeman1962dihomology2,zeeman1963dihomology}.
A more thorough investigation of sheaves on posets and their connection to Whitney numbers was undertaken by Bac\l{}awski~\cite{baclawski1975}.
Poset-theoretic descriptions of constructible sheaves were provided by Kashiwara~\cite{kashiwara1984riemann} and independently Shepard~\cite{shepard1986cellular}, who wrote their thesis under the direction of MacPherson.
Since then, the utility of studying sheaves and sheaf cohomology groups over posets has continued to be demonstrated, with the works of Yuzvinsky~\cite{yuzvinsky1991cohomology}, Yanagawa~\cite{yanagawa2001sheaves}, and Ladkani~\cite{ladkani2008} serving as some notable waypoints between the past and present.

However, why a functor $F:\pP\to \cat$ defines a sheaf is perhaps easy to explain, but not so easy to carefully prove. 
The explanation proceeds in two steps. First one topologizes $\pP$ by declaring up-sets\footnote{A subset $U\subseteq \pP$ is a \define{up-set} if whenever $p\in U$ and $p\leq q$, then $q\in U$.} to be open and observes that the association of a point $p$ to its principal up-set $U_p:=\{q \mid p\leq q\}$ provides a natural functor $\iota: \pP \to \Open(\pP)^{\op}$.
The second step says that the right Kan extension of $F$ along $\iota$, written $\Ran_{\iota} F$, provides a natural way of assigning data to open sets in a poset, which should be a sheaf.
However, a careful proof that this second step does what is promised is not something that is easy to find in the literature. 
Such a proof should also be easily dualizable to cosheaves.

% one can define $F(U)$ to be the the inverse limit of $F$ restricted to $U$.
% That this assignment defines a sheaf is easy to believe, but a careful proof does not seem to be written down, at least one that flexible enough to be adapted to sheaves or cosheaves.
% % Indeed in Section 2.2 of~\cite{ladkani2008}, it is asserted that it is enough to check the sheaf axiom for principal up-sets, i.e.~sets of the form $U_{p}:=\{q \mid p\leq \}$.

For some history, in~\cite{Curry2014} such a proof was proposed by using refinement of covers.
This argument is flawed as the counterexample in \cref{sec:refinement-mistake} shows.
This means that Corollary 2.4.4 of~\cite{Curry2014} and Corollary 2.15 of~\cite{curry2018dualities} are incorrect as stated.
This is problematic because Proposition 3.3 and Corollary 3.5 of~\cite{curry2018dualities}, which states that Kan extensions of functors modeled on posets give rise to sheaves and cosheaves on the Alexandrov topology, rests on the validity of these statements. 
However, the truth of those statements is saved by providing a new, self-contained proof that the left Kan extension of a functor along the embedding of a poset into its collection of down-sets is a cosheaf.
The statement for sheaves is easily dualized from here.
It should be noted that the original broken proof, the discovery of the mistake and the currently proposed fix here are all due to the author.

Along the way we introduce two notions of a cosheaf, which rest on two different notions of a cover.
These notions of a cover come from working with a basis of a topological space.
In our setting, principal down-sets serve as a basis for the Alexandrov topology.
This note aims to provide one part of a hopefully growing literature on the subject of foundations for (co)sheaf theory that is adapted to a basis of a space, but where we don't make any concreteness assumptions on the category $\cat$, such as in~\cite{stacks-project}.
Coming up with proofs and theorem statements that naturally dualize between sheaves and cosheaves would provide a welcome clarification at this point in the history of the theory.

\section{Background on Cosheaves}
\label{sec:cosheaves}

First we introduce three notions of a cover.
Let $X$ be a topological space and let $\Open(X)$ be the poset of open sets of $X$, ordered by containment.

\begin{defn}
Fix an open set $U$ and let $\covU=\{U_i\} \subseteq \Open(X)$ be a collection of open sets. 
\begin{enumerate}
	\item We say that $\covU$ is a \define{cover} of $U$ if the union of elements in $\covU$ is $U$.
	\item We say $\covU$ is a \define{\v{C}ech cover} of $U$ if $\covU$ is a cover of $U$ with the additional property that whenever a finite collection of $\{U_i\}_{i \in \sigma}\subset \covU$ has non-empty intersection $U_{\sigma}=\cap_{i\in \sigma} U_i$, then $U_{\sigma}\in \covU$.
	\item Finally, we say $\covU$ is a \define{basic cover} of $U$ if $\covU$ is a cover of $U$ with the property that whenever $U_i, U_j\in \covU$, then $U_i\cap U_j$ is the union of elements in $\covU$.
\end{enumerate}
\end{defn}

\begin{rmk}
The notion of a basic cover comes from considering the defining properties of a basis for a topological space $\topX$. A basis is rarely closed under intersection, but the intersections are unions of elements of the basis.
\end{rmk}

\begin{rmk}
The term \v{C}ech cover is borrowed from Dugger and Isaksen's article~\cite{dugger2004topological}.
The notion of a basic cover is closely related to the notion of a \emph{complete cover} given in the same article. 
The difference is that a basic cover requires that pairwise intersections be covered, whereas a complete cover requires that all finite intersections be covered by elements of the cover.
To see the difference, consider three open sets that have all possible intersections, i.e.~their nerve is a 2-simplex.
A \v{C}ech cover would have all the pair-wise intersections, but without the triple intersection; this corresponds to ``throwing in'' the 1-skeleton of the nerve into the cover. A complete cover would include the triple intersection as well, so the entire nerve would be included in a complete cover.
\end{rmk}

Note that we can regard the inclusion of the cover $\iota_{\covU} : \covU \hookrightarrow \Open(X)$ as an order-preserving map of posets.
Additionally, any poset can be viewed as a category with one object for each element and a unique morphism from one object to another whenever one element is less than another. 
The condition that a map of posets is order-preserving is exactly the condition that says the map of posets defines a functor.
We further note that the condition that $\covU$ is a cover of an open set $U$ is exactly the statement that the colimit of $\iota_{\covU}$ is $U$.
This brings us to two notions of a cosheaf.
We note that the corresponding definitions for sheaves can be easily dualized from here.

\begin{defn}\label{defn:cosheaf}
Let $X$ be a topological space and let $\cat$ be a category with all colimits.
A functor $\cosheaf{F}:\Open(X) \to \cat$ is a \define{cosheaf for $\covU$} if $\covU$ is a \v{C}ech cover and the universal arrow
\[
	\cosheaf{F}[\covU]:=\varinjlim \cosheaf{F} \circ \iota_{\covU} \to F(\varinjlim \iota_{\covU}) = F(U)
\]
is an isomorphism.
Moreover we say that $\cosheaf{F}$ is a \define{cosheaf} if for every \v{C}ech cover $\covU$, the functor $\cosheaf{F}$ is a cosheaf for $\covU$. 

More generally, we say that $\cosheaf{F}$ is a \define{basic cosheaf for $\covU$} if $\covU$ is a \emph{basic} cover and the above universal arrow is an isomorphism.
A \define{basic cosheaf} is then a functor such that for every basic cover the above arrow is an isomorphism. 
%Denote the category of cosheaves on $\topX$ by $\CoShv(\topX;\cat)$.
\end{defn}

\begin{rmk}
Note that every basic cosheaf is a cosheaf by virtue of the fact that every \v{C}ech cover is a basic cover. 
If we think of a cosheaf as being a functor that commutes with colimits of type $\iota_{\covU}$ when $\covU$ is \v{C}ech cover, then basic cosheaves are functors that commute with a broader class of colimits, namely those functors $\iota_{\covU}$ for which $\covU$ is a basic cover.
\end{rmk}

\section{A Mistaken Argument Using Refinement of Covers}\label{sec:refinement-mistake}

Recall that we say that a cover $\covU_1$ \define{refines} $\covU_2$ if whenever there is a $U'\in \covU_2$, there is an element $U\in \covU_1$ such that $U\subseteq U'$.
Corollary 2.4.4 of~\cite{Curry2014} and Corollary 2.15 of~\cite{curry2018dualities} makes the following, incorrect, assertion:

\begin{quote}
{\em If $\covU_1$ refines $\covU_2$ and if $\cosheaf{F}$ is a cosheaf for $\covU_1$, then it is a cosheaf for $\covU_2$. In other words, if the curved arrow below is an isomorphism, then the other arrows must be isomorphisms as well.}
\end{quote}

\[
\xymatrix{\cosheaf{F}[\covU_1] \ar@/^2pc/[rr] \ar[r] & \cosheaf{F}[\covU_2] \ar[r] & \cosheaf{F}(U)}
\]

The flaw in this reasoning is obvious because it uses the fallacious ``one out of three'' rule for isomorphisms. 
The correct ``two out of three'' rule for isomorphisms says that whenever there is a commutative triangle in a category where two of the arrows are isomorphisms, then the other arrow must be an isomorphism as well.
% The author admits that even when writing~\cite{Curry2014} something felt wrong with the proof, but when hundreds of pages of mathematics were being written over a few weeks, a careful proof of \cref{thm:kan-cosheaves} did not seem worth worrying about, as it seemed well-understood by experts.
% Nevertheless, was finally recognized by us.

\begin{figure}
\includegraphics[width=.7\textwidth]{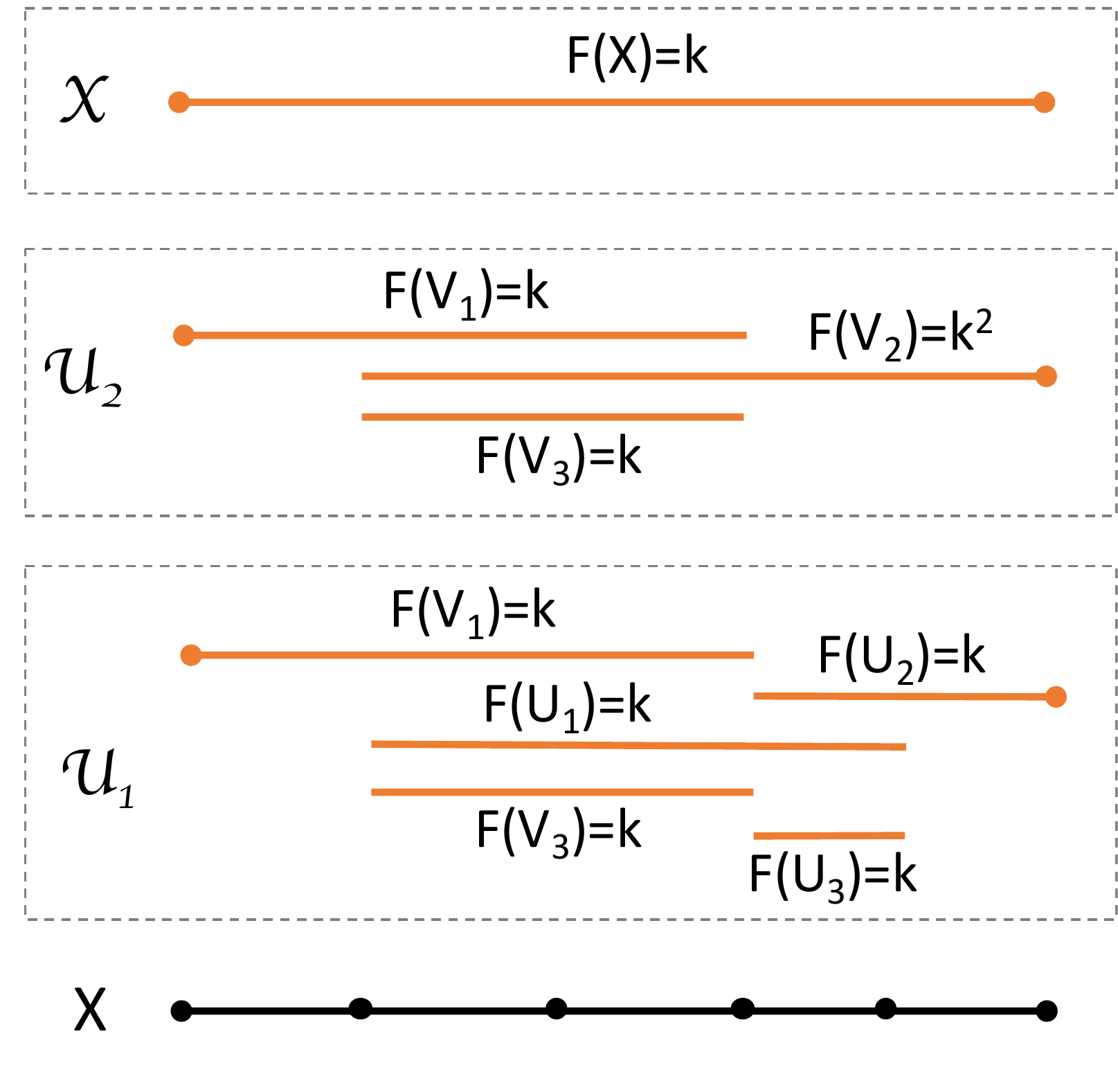}
\caption{A counter example to the statement that the cosheaf property is inherited by coarser covers.}
\label{fig:refinement-counter-example}
\end{figure}

Just to be clear that there is no way of repairing the above statement, consider the example drawn in Figure~\ref{fig:refinement-counter-example}.
Here $X$ can be viewed as the interval, but with a cell structure given by the indicated vertices.
For the covers considered in our example, we need only use unions of open stars of cells.
Near each open set we have indicated the value of a precosheaf $F$, valued in $\Bbbk$-vector spaces.
For our example, we have $F(X)=\Bbbk$, $F(V_2)=\Bbbk\oplus\Bbbk\cong \Bbbk^2$, and all other indicated open sets are also assigned the value $\Bbbk$.
For any open set $W$ contained in $V_2$, we declare the map $F(W)\to F(V_2)$ to be the inclusion into the first factor. To ensure functoriality, we declare the map $F(V_2)\to F(X)$ to be projection onto the first factor.
Clearly $\covU_1$ refines $\covU_2$, and even though the colimit of $F$ over $\covU_1$ is $\Bbbk$, the colimit of $F$ over $\covU_2$ is $\Bbbk^2$.
Indeed, this confirms the correct observation that the curved isomorphism decomposes
\[
\xymatrix{\cosheaf{F}[\covU_1] \ar@/^2pc/[rr]^{\cong} \ar@{^{(}->}[r] & \cosheaf{F}[\covU_2] \ar@{->>}[r] & \cosheaf{F}(X)}
\]
as an injection following by a surjection.

% \begin{ex}
% Recall that a \define{down-set} in a poset $\pP$ is a set

% EDIT INCOMPLETE

% Consider a downset $S\in \Down(\pP)$.
% The collection of principal downsets $\{D_p\}_{p\in S}$ is a basic cover of the set $S$.
% In fact, it is the finest possible cover of $S$.
% Note that the intersection of two principal downsets need not be principal, in general.
% \end{ex}

% \begin{ex}
% Consider the case where $\pP = \Open(\R^n)$ and let $S \in \Down(\pP)$.
% Axis aligned boxes of the form $U = U_1\times U_2 \times \cdots U_n$ for intervals $U_i \subset \R$ generate the standard topology on $\R^n$, and thus the collection $\{D_U \mid U \in S\}$ is a basic cover of $S$.

% \end{ex}

% We now provide two notions of a cosheaf.

% \begin{rmk}
% The assumption that the cover is basic is essential.
% If a cover does not have elements that witness when certain open sets intersect, then the colimit will not know to glue objects accordingly.
% Another approach, cf.~\cite[Defn. 2.1.5]{Curry2014}, is to force the colimit to be over the nerve of a cover, which adds in intersections.
% \end{rmk}

\section{Kan Extensions Define Cosheaves On Posets}

Let $\pP$ be a poset.
A \define{down-set} in $\pP$ is a subset $S\subseteq \pP$ with the property that whenever $q\in S$ and $p\leq q$, then $p\in S$.
It is easy to see that the collection of down-sets $\Down(\pP)$ defines a topology on $\pP$.
This is often called the \define{Alexandrov topology} and we can regard $\Down(\pP)=\Open(\pP)$ where down-sets are open sets.

The main theorem of this note, \cref{thm:kan-cosheaves}, states that whenever we have a functor $F:\pP \to \cat$, where $\pP$ is a poset and $\cat$ is a co-complete category, then the only step necessary to define a cosheaf is the left Kan extension.
To provide as self-contained a treatment as possible, we now review the key notions required to carefully define the left Kan extension as well as the notions of cofinality required to prove \cref{thm:kan-cosheaves}.

\begin{defn}\label{defn:comma-cat}
Suppose $E:\aat\to\bat$ is a functor and let $b$ be an object of $\bat$.
The \define{comma category under $b$}, written $(E\downarrow b)$, is defined as follows:
\begin{itemize}
	\item The objects of $(E\downarrow b)$ are morphisms in $\bat$ of the form $\alpha: E(a) \to b$  where $a$ is any object of $\aat$.
	\item A morphism of $(E\downarrow b)$ between two objects $\alpha: E(a) \to b$ and $\alpha': E(a') \to b$ is a morphism $\gamma: a \to a'$ in $\aat$ making the following diagram commute:
	\[
		\xymatrix{E(a) \ar[rr]^{E(\gamma)} \ar[rd]_{\alpha} & & E(a') \ar[ld]^{\alpha'} \\ & b &}
	\]
\end{itemize}
There is also a \define{comma category over $b$}, written $(b \downarrow E)$, that is defined completely dually: objects are morphisms in $\bat$ of the form $\alpha: b \to E(a)$ for some $a$ in $\aat$, morphisms are morphisms in $\aat$ making the dual triangle commute:
    \[
    	\xymatrix{ & b \ar[dl]_{\alpha} \ar[dr]^{\alpha'} &  \\ E(a) \ar[rr]_{E(\gamma)} & & E(a') }
    \]
\end{defn}

For an example of a comma category, we consider the special case of maps between posets.
% Note that every map of posets defines a functor when the posets are viewed as categories with a unique morphism $p \to p'$ whenever $p\leq p'$.

\begin{ex}
Recall that a map of posets $f:\pP\to\pQ$ is equivalently a functor.
Substituting $f$ for $E$ in the above definition leads to the following interpretations:
The comma category $(f\downarrow q)$ is simply the sub-poset of $\pP$ consisting of those $p$ such that $f(p)\leq q$, which one might call the ``sublevel set of $f$ at $q$.''
The comma category $(d\downarrow f)$ is thus the superlevel set of $f$ at $q$.
\end{ex}

The comma category $(E\downarrow b)$ associated to a functor $E:\aat\to\bat$ and an object $b$ in $\bat$, has a natural projection functor $\pi^b : (E\downarrow b) \to \aat$ that sends an object $\alpha: E(a) \to b$ to the object $a$ in $\aat$, a morphism $\gamma:a\to a'$ goes to the same morphism in $\aat$.
This observation, and this particular choice of comma category, allows us to define the \emph{left Kan extension} of a functor $F:\aat \to \cat$ \emph{along} the functor $E:\aat \to \bat$.

\begin{defn}[Pointwise Kan Extensions, cf.~\cite{riehl2017category} Thm. 6.2.1]

The \define{left Kan extension} of $F:\aat \to \cat$ along $E:\aat \to \bat$ is a functor $\Lan_E F:\bat \to \cat$ that assigns to an object $b$ of $\bat$ the following colimit
\[
	\Lan_E F (b) = \varinjlim \left( (E \downarrow b) \xrightarrow[]{\pi} \aat \xrightarrow[]{F} \cat \right) = \varinjlim_{E(a) \rightarrow b} F(a)
\]
Morphisms are sent to corresponding universal maps between colimits. 
\end{defn}

\begin{ex}
Suppose $j: \pP \hookrightarrow \pQ$ is an inclusion of posets and suppose $F:\pP \to \cat$ is a functor.
The left Kan extension of $F$ along $j$ assigns to an element $q\in Q$ the colimit of $F$ over the sublevel set of $j$ at $q$.
Note that this uses the fact that there there is at most one morphism of the form $j(p) \leq q$.
Moreover, if the inclusion is full, i.e.~if $p\leq_{\pP} p'$ if and only if $j(p)\leq_{\pQ} j(p')$, then the colimit can be viewed as occurring over all $p\in \pP$ such that $j(p)\leq q$.
\end{ex}

The following example is of utmost importance.

\begin{ex}
Let $\iota: \pP \hookrightarrow \Down(\pP)$ denote the map of posets that sends $p\in \pP$ to the principal down-set $D_p$.
Let $S\in \Down(\pP)$ be an arbitrary down-set.
The reader is asked to convince themselves that the comma category $(\iota \downarrow S)$ is given by the full subcategory of $\pP$ whose objects are those $p\in S$, which we write as $\pP_S$.
Consequently, if we wish to consider the left Kan extension of a functor
$F:\pP \to \cat$, 
then we have that
\[
	\widehat{F}(S) := \Lan_{\iota} F(S) = \varinjlim \left( \pP_S \hookrightarrow \pP \to \cat \right) = \varinjlim_{p\in S} F(p).
\]
\end{ex}

Finally, we mention that comma categories are used in the definition of cofinality, which will be used in the proof of \cref{thm:kan-cosheaves}.

\begin{defn}\label{defn:cofinal}
A functor $E: \aat \to \bat$ is \define{cofinal} if for every object $b$ in $\bat$ the comma category $(b \downarrow E)$ is 
\begin{itemize}
\item non-empty, and 
\item connected.
\end{itemize}
Equivalently, a functor $E$ is cofinal if for every functor $F:\bat \to \cat$ to any category $\cat$ the induced map on colimits
\[
	\varinjlim F\circ E \to \varinjlim F
\] 
is an isomorphism.
\end{defn} 

\begin{rmk}
Note that the equivalence of these two definitions says that whether a diagram $F$ indexed by $\bat$ has the same colimit when restricted along $E:\aat \to \bat$ is dictated by the ``topological'' properties (nonemptiness and connectedness) of the comma categories $(b \downarrow E)$ for all objects $b$ in $\bat$.
Viewing these comma categories as fibers, the equivalence of the above two cofinality conditions is perhaps best viewed as a categorical analogue of the Vietoris Mapping Theorem.
\end{rmk}

We now prove the main theorem of this section. The following proof should replace the mistaken proof of Theorem 4.2.10 of~\cite{Curry2014}.

\begin{thm}
\label{thm:kan-cosheaves}

Let $F:\pP \to \cat$ be a functor from a poset $\pP$ to a co-complete category $\cat$.
Let $\iota: \pP \to \Down(\pP)$ denote the association of an element $p\in \pP$ to the principal down-set $D_p$.
The left Kan extension of $F$ along $\iota$, written $\widehat{F}$ below, is a basic cosheaf.
\[
\xymatrix{ \pP \ar[r]^{F} \ar[d]_{\iota} & \cat \\ \Down(\pP) \ar@{.>}[ru]_{\Lan_{\iota} F =: \widehat{F}} & }
\]

% The category of functors from $\pP$ to $\cat$, that is $\pP$-modules valued in $\cat$, is equivalent to the category of cosheaves on $\pP$ topologized by down sets, i.e.
% \[
% \Fun(\pP,\cat) \simeq \CoShv(\pP;\cat).
% \]
\end{thm}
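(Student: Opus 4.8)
The plan is to reduce the basic cosheaf condition to a single cofinality statement and then verify cofinality by hand, with the defining property of a basic cover entering at exactly one, decisive, step. Fix a basic cover $\covU=\{U_i\}$ of a down-set $U$, so that $\varinjlim \iota_{\covU}=\bigcup_i U_i = U$. Unwinding \cref{defn:cosheaf}, the domain of the universal arrow is the iterated colimit
\[
\widehat{F}[\covU] \;=\; \varinjlim_{U_i \in \covU} \widehat{F}(U_i) \;=\; \varinjlim_{U_i \in \covU}\; \varinjlim_{p \in \pP_{U_i}} F(p),
\]
while its codomain is $\widehat{F}(U)=\varinjlim_{p\in \pP_U} F(p)$. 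Since $\cat$ is co-complete every colimit in sight exists, and the whole task is to show that the comparison map between these two objects is an isomorphism.

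First I would collapse the iterated colimit into a single colimit over a Grothendieck construction (the ``Fubini theorem'' for colimits, cf.~\cite{riehl2017category}). Let $\mathcal{E}$ be the category whose objects are pairs $(U_i,p)$ with $U_i\in \covU$ and $p\in U_i$, and whose morphisms $(U_i,p)\to(U_j,p')$ are exactly the relations $U_i\subseteq U_j$ and $p\leq p'$. Writing $\pi:\mathcal{E}\to\pP$ for the projection $(U_i,p)\mapsto p$, the Fubini theorem supplies a canonical isomorphism $\widehat{F}[\covU]\cong \varinjlim_{(U_i,p)\in\mathcal{E}} F(p)$. Because every such $p$ satisfies $p\in U_i\subseteq U$, the functor $\pi$ factors through the full subposet $\pP_U\hookrightarrow\pP$, yielding $\Pi:\mathcal{E}\to\pP_U$, $(U_i,p)\mapsto p$, with $F\circ\pi=(F|_{\pP_U})\circ\Pi$. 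One then checks that the universal arrow of \cref{defn:cosheaf} coincides with the comparison map $\varinjlim (F|_{\pP_U})\circ\Pi \to \varinjlim F|_{\pP_U}$ induced by $\Pi$. Hence it suffices to prove that $\Pi$ is cofinal in the sense of \cref{defn:cofinal}.

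I would then verify cofinality directly, showing that for every $p\in\pP_U$ the comma category $(p\downarrow \Pi)$ is non-empty and connected. Its objects are pairs $(U_i,q)$ with $q\in U_i$ and $p\leq q$; note that since each $U_i$ is a down-set, $p\leq q\in U_i$ forces $p\in U_i$. Non-emptiness is immediate: as $U=\bigcup_i U_i$ there is some $U_i$ containing $p$, giving the object $(U_i,p)$. For connectedness, given two objects $(U_i,q)$ and $(U_j,q')$ over $p$, the morphisms $(U_i,p)\to(U_i,q)$ and $(U_j,p)\to(U_j,q')$ reduce the problem to connecting $(U_i,p)$ and $(U_j,p)$, where now $p\in U_i\cap U_j$.

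This is exactly where the basic cover hypothesis is indispensable. Since $\covU$ is basic, $U_i\cap U_j$ is a union of members of $\covU$, so there is some $U_k\in\covU$ with $p\in U_k\subseteq U_i\cap U_j$. The resulting span $(U_i,p)\leftarrow (U_k,p)\rightarrow (U_j,p)$ connects the two objects, completing the zig-zag. Thus $(p\downarrow\Pi)$ is connected, $\Pi$ is cofinal, and the universal arrow is an isomorphism; as $\covU$ was an arbitrary basic cover, $\widehat{F}$ is a basic cosheaf. I expect the connectedness verification to be the main obstacle — in particular, recognizing that the basic-cover axiom is precisely the device that produces the connecting element $U_k$, and that this is exactly what fails for a general cover (consistent with the counterexample of \cref{sec:refinement-mistake}). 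The Fubini reduction, though essential, is a standard manipulation.
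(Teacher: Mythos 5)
Your proof is correct, and at its combinatorial core it is the same argument as the paper's: your category $\mathcal{E}$ is exactly the paper's auxiliary category $\cJ$ of pairs $(V_i,p)$, and the decisive use of the basic-cover axiom --- producing some $U_k\in\covU$ with $p\in U_k\subseteq U_i\cap U_j$ to complete a zig-zag through the fiber over $p$ --- is the same move the paper makes. Where you genuinely diverge is in the packaging of the two reductions surrounding that move. The paper does not invoke the Fubini/Grothendieck-construction theorem as a black box; it proves the relevant instance by showing $\widehat{F}(V_i)\cong\Lan_{\pi_1}F(V_i)$ via a cofinality check for $\pP_{V_i}\to(\pi_1\downarrow V_i)$, and then shuttles between the two iterated Kan extensions using $\Lan_{\slfp}\Lan_{\pi_1}F\cong\Lan_{\slfp\circ\pi_1}F=\Lan_{\slfp\circ\pi_2}F\cong\Lan_{\slfp}\Lan_{\pi_2}F$. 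For the final step the paper computes $\Lan_{\pi_2}F(p)\cong F(p)$ pointwise by exhibiting a co-cone factorization, rather than verifying that $\pi_2$ itself is cofinal. Your version --- one citation of the standard iterated-colimit theorem plus a single application of the comma-category criterion of \cref{defn:cofinal} to $\Pi=\pi_2$ --- is arguably tighter and makes the logical role of the basic-cover axiom (connectedness of $(p\downarrow\pi_2)$, which is exactly what fails in \cref{sec:refinement-mistake}) more transparent; the cost is that you lean on an external Fubini theorem where the paper supplies its own proof of that instance, and you leave the identification of the universal arrow of \cref{defn:cosheaf} with the comparison map induced by $\Pi$ as an unchecked, though routine, compatibility.
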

\begin{proof}
Let $S\in \Down(\pP)$ be an arbitrary down-set in $\pP$ and let $\cV=\{V_i\}$ be a basic cover of $S$ by down-sets.
We must show that the induced map
\begin{equation}\label{eqn:cosheaf-check}
	\varinjlim_{V_i \in \cV} \widehat{F}(V_i) \to \widehat{F}(S) = \varinjlim_{p \in S} F(p)
\end{equation}
is an isomorphism.

The left hand side of the above equation is properly viewed as an iterated colimit, which suggests that the indexing category is a product category.
Instead of working with a product category, we will introduce an auxiliary category associated to the cover $\cV$, which we call $\calJ$.
The objects of $\calJ$ are pairs
\[
	(V_i,p) \qquad \text{where} \qquad V_i\in \cV \qquad \text{and} \qquad p\in V_i.
\]
There is a unique morphism from $(V_i,p) \to (V_j,q)$ if $V_i\subseteq V_j$ and $p\leq q$.
Notice that we have two natural projection functors:
\[
	\pi_1 :\cJ \to \cV \qquad \pi_2:\cJ \to \pP_S \qquad \text{where} \qquad \pi_1(V_i,p)=V_i \qquad \pi_2(V_i,p)=p
\] 
These fit into the following commutative diagram of categories
\[
	\xymatrix{\cJ \ar[r]^{\pi_1} \ar[d]_{\pi_2} & \cV \ar[d]_{\slfp} \\ \pP_S \ar[r]_{\slfp} & \star }
\]
where $\star$ is the category with only one object and one morphism.
The strategy of the proof is to show that isomorphism desired in Equation~\ref{eqn:cosheaf-check} can be demonstrated using commutativity of the above square and Kan extensions.
Note that $F:\pP \to \cat$ defines, by abuse of notation, a functor $F:\cJ \to \cat$ by assigning to each pair $(V_j,q)$ the value $F(q)$.
In other words $F:\cJ \to \cat$ is really defined by pulling back $F:\pP \to \cat$ along $\pi_2: \cJ \to \pP_S \subseteq \pP$.

The first major step in our proof is to rephrase $\widehat{F}(V_i)$ in terms of a left Kan extension along $\pi_1$. Specifically, we prove that for any $V_i\in \cV$ we have the following isomorphism that is natural in $V_i$:
\[
	\widehat{F}(V_i):= \varinjlim_{p\in \pP_{V_i}} F(p) \to \varinjlim_{(V_j,q) \mid V_j\subseteq V_i} F(V_j) =: \Lan_{\pi_1} F(V_i)
\]
To prove this, we show that the comma category $(\pi_1 \downarrow V_i)$ contains the cofinal system $\pP_{V_i}$.
Recall that the objects of $(\pi_1 \downarrow V_i)$ are arrows in $\cV$ of the form $\pi_1(V_j,q)=V_j \to V_i$. In other words, objects of $(\pi_1 \downarrow V_i)$ are in bijection with objects $(V_j,q)\in \cJ$ which satisfy the nested sequence of inclusions $q\in V_j \subseteq V_i$.
We will use the notation $(q \in V_j \subseteq V_i)$ to refer to an object of $(\pi_1 \downarrow V_i)$. 
We must check that the functor
\[
	\slfj_{V_i}: \pP_{V_i} \to (\pi_1 \downarrow V) \qquad \text{where} \qquad q \mapsto (q\in V_i \subseteq V_i) =: \slfj_{V_i}(q)
\]
satisfies the two requirements of cofinality given in Definition~\ref{defn:cofinal}. 
First, the fiber over any object $(q \in V_j \subseteq V_i)$ in $(\pi_1 \downarrow V_i)$ is non-empty because we always have the arrow
\[
	(q\in V_j\subseteq V_i) \to (q \in V_i \subseteq V_i)=\slfj_{V_i}(q).
\]
Moreover, the fiber is connected because whenever we have two objects in the fiber over $(q\in V_j\subseteq V_i)$
    \[
    	\xymatrix{ & (q\in V_j \subseteq V_i) \ar[dl] \ar[dr] &  \\ \slfj_{V_i}(p) & & \slfj_{V_i}(r) }
    \]
    there is always the third object $\slfj_{V_j}(q)$ connecting the other two.
      \[
    	\xymatrix{ & (q\in V_j \subseteq V_i) \ar[dl] \ar[d] \ar[dr] &  \\ \slfj_{V_i}(p) & \slfj_{V_i}(q) \ar[l] \ar[r] & \slfj_{V_i}(r) }
    \]  
This proves our first isomorphism, that $\widehat{F}(V_i)$ can be computed as a Kan extension along $\pi_1$.

The utility of this first isomorphism is that we can compute the colimit of $\widehat{F}$ over the cover $\cV$ as an iterated Kan extension, i.e.~since colimits are equivalent to a left Kan extension along the constant map $\slfp$, we have that 
\[
	\varinjlim_{V_i\in \cV} \widehat{F}(V_i) = \varinjlim_{V_i\in \cV} \varinjlim_{p\in \pP_{V_i}} F(p) \cong \mathsf{Lan}_{\slfp} \Lan_{\pi_1} F.
\]
Since the composition of Kan extensions is naturally isomorphic to the Kan extension of the composition we have
\[
	\Lan_{\slfp} \Lan_{\pi_1} F \cong \Lan_{\slfp \circ \pi_1} F = \Lan_{\slfp\circ \pi_2} F \cong \Lan_{\slfp}\Lan_{\pi_2} F. 
\]

The next part of the proof is to show that we have the following isomorphism, natural in $p\in \pP$:
\[
	\Lan_{\pi_2} F (p) \cong F(p)
\]
We do this by direct calculation.
Observe that $(\pi_2 \downarrow p)$ has objects that are pairs $(V_j,q)$ with $q\leq p$.
Now consider two objects in this comma category $(V_j,q)$ and $(V_k,r)$.
By necessity, $q\leq p$ and $r\leq p$.
Since $\cV$ is a cover of $S$, we have that there must be some $(V_i,p)$ with $V_i\in \cV$ and $p\in V_i$.
Moreover, since $\cV$ is a cover by \emph{down-sets}, we have that the intersections $V_j\cap V_i$ and $V_i \cap V_k$ are both non-empty, containing the elements $q$ and $r$, respectively.
Since $\cV$ is a \emph{basic cover}, we have the existence of other cover elements $V_{ij,q}$ and $V_{ik,r}$ containing $q$ and $r$ and contained in the intersections $V_i\cap V_j$ and $V_i\cap V_k$.
This implies we have the following diagram
\[
\xymatrix{ & \pi_2(V_{ij,q},q) \ar[ld]_{\id} \ar[rd] & & \pi_2(V_{ik,r},r) \ar[ld] \ar[rd]^{\id} & \\ \pi_2(V_j,q) & & \pi_2(V_i,p) & & \pi_2(V_j,r)}
\]
and hence any co-cone on this diagram (after applying the functor $F$) factors through the value $F(p)$, thereby implying that the colimit of $F$ over the comma category $(\pi_2 \downarrow p)$ is $F(p)$.
We note that this argument is not dependent on the cover element $V_i$ chosen to contain $p$.
Indeed if $V_{i'}$ were another cover element containing $p$, then we could intersect $V_i$ with $V_{i'}$ and find another cover element contained in the intersection and containing $p$.
Each of these elements and their inclusions of $\cJ$ is carried by $\pi_2$ to the constant diagram with value $p$.

\end{proof}

\section{Acknowledgements}

The author would like to thank Professor Marco Varisco at the University at Albany, State University of New York, for help structuring and arriving at the new proof of \cref{thm:kan-cosheaves}. 
Any remaining mistakes are the author's own.
Additionally, the author would like to thank the NSF for supporting their work under grant CCF-1850052.

\printbibliography

\end{document}